\newtheorem{thm}[subsection]{Theorem}
\newtheorem{lemma}[subsection]{Lemma}
\numberwithin{equation}{section} \setcounter{tocdepth}{1}
\newcommand{\Q}{\mathbb{Q}}
\newcommand{\C}{\mathbb{C}}
\newcommand{\bc}{\begin{center}}
\newcommand{\ec}{\end{center}}
\newcommand{\be}{\begin{equation}}
\newcommand{\ee}{\end{equation}}
\newcommand{\bea}{\begin{eqnarray}}
\newcommand{\eea}{\end{eqnarray}}
\newcommand{\ba}{\begin{array}}
\newcommand{\ea}{\end{array}}
\def\l{\lambda}
\def\g{\gamma}
\def\r{\rho}
\def\m{\mu}
\def\n{\nu}
\def\Q{\mathbb{Q}}
\def\N{\mathbb{N}}
\def\C{\mathbb{C}}
\begin{document}

\title[On a non-linear $p$-adic dynamical system]{On a non-linear $p$-adic dynamical system}

\author{Rozikov U.A., Sattarov I.A.}

 \address{U.\ A.\ Rozikov \\ Institute of mathematics,
29, Do'rmon Yo'li str., 100125, Tashkent, Uzbekistan.} \email
{rozikovu@yandex.ru}
 \address{I.A. Sattarov\\ Namangan state university, Namangan, Uzbekistan.} \email
{iskandar1207@rambler.ru}

\begin{abstract}

We investigate the behavior of trajectories of a
$(3,2)$-rational $p$-adic dynamical system in the complex $p$-adic
filed $\C_p$, when there exists a unique fixed point $x_0$.
We study this $p$-adic dynamical system by dynamics of real
radiuses of balls (with the center at the fixed point $x_0$). We show that there exists a radius $r$ depending on parameters of the rational function such that: when $x_0$ is an attracting point then the trajectory
of an inner point from the ball $U_r(x_0)$ goes to $x_0$ and each sphere with a radius $>r$ (with the center at $x_0$) is invariant;
When $x_0$ is a repeller point then the trajectory
of an inner point from a ball $U_r(x_0)$ goes forward to the sphere $S_r(x_0)$. Once the trajectory reaches the sphere, in the next step it either goes back to the interior of $U_r(x_0)$ or stays in $S_r(x_0)$ for some time and then goes back to the interior of the ball. As soon as the
trajectory goes outside of $U_r(x_0)$ it will stay (for all the rest of time) in the sphere (outside of $U_r(x_0)$) that it reached first.

\end{abstract}

\keywords{Rational dynamical systems; attractors; Siegel disk;
complex $p$-adic field.} \subjclass[2010]{46S10, 12J12, 11S99,
30D05, 54H20.} \maketitle

\section{Introduction}
What is the main difference between real and $p$-adic space-time? It is the Archimedean axiom. According to this axiom any given large segment on a stright line can be surpassed by successive addition of small segments along the same line. This axiom is valid in the set of real numbers and is not valid in the field of $p$-adic numbers $\Q_p$. However, it is a physical axiom which concerns the process of measurement. To exchange a number field $R$ to $\Q_p$ is the same as to exchange axiomatics in quantum physics (\cite{K}, \cite{VV}).

The representation of $p$-adic numbers by sequences of digits gives a possibility
to use this number system for coding of information. Therefore $p$-adic
models can be used for the description of many information processes. In
particular, they can be used in cognitive sciences, psychology and sociology.
Such models based on $p$-adic dynamical systems \cite{A1}, \cite{A2}.

The study of $p$-adic dynamical systems arises in Diophantine
geometry in the constructions of canonical heights, used for
counting rational points on algebraic varieties over a number
field, as in \cite{CS}. There most recent monograph on $p$-adic dynamics is Anashin
and Khrennikov, \cite{AK09}; nearly a half of Silverman's monograph \cite{Sil07}
also concerns $p$-adic dynamics.

Here are areas where $p$-adic dynamics proved to be effective: computer
science (straight line programs), numerical analysis and simulations
(pseudorandom numbers), uniform distribution of sequences, cryptography
(stream ciphers, $T$-functions), combinatorics (Latin squares),
automata theory and formal languages, genetics. The monograph
\cite{AK09} contains the corresponding survey. For a newer results see recent
papers and references therein: \cite{ARS},  \cite{AKY11}, \cite{Ana10}, \cite{Ana12}, \cite{FL11}, \cite{KMa},  \cite{KLPS09} - \cite{Pin09}, \cite{SAL12}. Moreover,
there are studies in computer science and
cryptography which along with mathematical physics stimulated in
1990-th intensive research in $p$-adic dynamics since it was observed
that major computer instructions (and therefore programs composed
of these instructions) can be considered as continuous transformations
with respect to the 2-adic metric, see \cite{Ana94}, \cite{Ana98}.

In this paper we investigate the behavior of trajectories of an
arbitrary $(3,2)$-rational $p$-adic dynamical system in complex
$p$-adic filed $\C_p$. The paper is organized as follows: in
Section 2 we give some preliminaries. Section 3 contains the
definition of the $(3,2)$-rational function and main results about behavior of trajectories of the $p$-adic dynamical system.

\section{Preliminaries}

\subsection{$p$-adic numbers}

Let $\Q$ be the field of rational numbers. The greatest common
divisor of the positive integers $n$ and $m$ is denotes by
$(n,m)$. Every rational number $x\neq 0$ can be represented in the
form $x=p^r\frac{n}{m}$, where $r,n\in\mathbb{Z}$, $m$ is a
positive integer, $(p,n)=1$, $(p,m)=1$ and $p$ is a fixed prime
number.

The $p$-adic norm of $x$ is given by
$$
|x|_p=\left\{
\begin{array}{ll}
p^{-r}, & \ \textrm{ for $x\neq 0$},\\[2mm]
0, &\ \textrm{ for $x=0$}.\\
\end{array}
\right.
$$
It has the following properties:

1) $|x|_p\geq 0$ and $|x|_p=0$ if and only if $x=0$,

2) $|xy|_p=|x|_p|y|_p$,

3) the strong triangle inequality
$$
|x+y|_p\leq\max\{|x|_p,|y|_p\},
$$

3.1) if $|x|_p\neq |y|_p$ then $|x+y|_p=\max\{|x|_p,|y|_p\}$,

3.2) if $|x|_p=|y|_p$ then $|x+y|_p\leq |x|_p$,

this is a non-Archimedean one.

The completion of $\Q$ with  respect to $p$-adic norm defines the
$p$-adic field which is denoted by $\Q_p$ (see \cite{R}, \cite{S}).

The well-known Ostrovsky's theorem asserts that norms
$|x|=|x|_{\infty}$ and $|x|_p$, $p=2,3,5...$ exhaust all
nonequivalent norms on $\Q$. Any $p$-adic number
$x\neq 0$ can be uniquely represented in the canonical series:
$$
x=p^{\gamma(x)}(x_0+x_1p+x_2p^2+...) , \eqno(2.1)
$$
where $\gamma=\gamma(x)\in\mathbb Z$ and $x_j$ are integers,
$0\leq x_j\leq p-1$, $x_0>0$, $j=0,1,2,...$. Observe that in this case $|x|_p=p^{-\g(x)}$.

The algebraic completion of $\Q_p$ is denoted by $\C_p$ and it is
called {\it complex $p$-adic numbers}.  For any $a\in\C_p$ and
$r>0$ denote
$$
U_r(a)=\{x\in\C_p : |x-a|_p\leq r\},\ \ V_r(a)=\{x\in\C_p :
|x-a|_p< r\},
$$
$$
S_r(a)=\{x\in\C_p : |x-a|_p= r\}.
$$

A function $f:U_r(a)\to\C_p$ is said to be {\it analytic} if it
can be represented by
$$
f(x)=\sum_{n=0}^{\infty}f_n(x-a)^n, \ \ \ f_n\in \C_p,
$$ which converges uniformly on the ball $U_r(a)$.

\subsection{Dynamical systems in $\C_p$}

In this section we recall some known facts concerning dynamical
systems $(f,U)$ in $\C_p$, where $f: x\in U\to f(x)\in U$ is an
analytic function and $U=U_r(a)$ or $\C_p$.

Now let $f:U\to U$ be an analytic function. Denote $x_n=f^n(x_0)$,
where $x_0\in U$ and $f^n(x)=\underbrace{f\circ\dots\circ
f}_n(x)$.

Recall some  the standard terminology of the theory of dynamical
systems. If $f(x_0)=x_0$ then $x_0$
is called a {\it fixed point}. The set of all fixed points of $f$
is denoted by Fix$(f)$. A fixed point $x_0$ is called an {\it
attractor} if there exists a neighborhood $V(x_0)$ of $x_0$ such
that for all points $y\in V(x_0)$ it holds
$\lim\limits_{n\to\infty}y_n=x_0$. If $x_0$ is an attractor then
its {\it basin of attraction} is
$$
A(x_0)=\{y\in \C_p :\ y_n\to x_0, \ n\to\infty\}.
$$
A fixed point $x_0$ is called {\it repeller} if there  exists a
neighborhood $V(x_0)$ of $x_0$ such that $|f(x)-x_0|_p>|x-x_0|_p$
for $x\in V(x_0)$, $x\neq x_0$. Let $x_0$ be a fixed point of a
function $f(x)$. The ball $V_r(x_0)$ (contained in $U$) is said to
be a {\it Siegel disk} if each sphere $S_{\r}(x_0)$, $\r<r$ is an
invariant sphere of $f(x)$, i.e. if $x\in S_{\r}(x_0)$ then all
iterated points $x_n\in S_{\r}(x_0)$ for all $n=1,2\dots$.  The
union of all Siegel desks with the center at $x_0$ is said to {\it
a maximum Siegel disk} and is denoted by $SI(x_0)$.

 In complex geometry, the center of a disk
is uniquely determined by the disk, and different fixed points
cannot have the same Siegel disks. In non-Archimedean geometry, a
center of a disk is a point which belongs to the disk. Therefore,
different fixed points may have the same Siegel desk.

Let $x_0$ be a fixed point of an analytic function  $f(x)$. Put
$$
\l=\frac{d}{dx}f(x_0).
$$

The point $x_0$ is {\it attractive} if $0\leq |\l|_p<1$, {\it
indifferent} if $|\l|_p=1$, and {\it repelling} if $|\l|_p>1$.

\section{$(3,2)$-rational $p$-adic dynamical systems with a unique fixed point}

A function is called a $(n,m)$-rational function if and only if it
can be written in the form $f(x)={P_n(x)\over Q_m(x)}$, where
$P_n(x)$ and $Q_m(x)$ are polynomial functions with degree $n$ and
$m$ respectively, $Q_m(x)$ is not the zero polynomial.

In this paper we consider the dynamical system associated with the
$(3,2)$-rational function $f:\C_p\to\C_p$ defined by
\begin{equation}\label{3.1}
f(x)=\frac{x^3+ax^2+bx+c}{x^2+ax+d}, \ \ a,b,c,d\in \C_p, b\ne d\ \
\end{equation}

where  $x\neq \hat x_{1,2}=\frac{-a\pm\sqrt{a^2-4d}}{2}$.

Denote
$$\mathcal P=\{x\in \C_p: \exists n\in \N, f^n(x)=\hat x_{1,2}\}.$$

The function $f$ has a unique fixed point $x_0={c\over d-b}$.

For any $x\in \C_p\setminus\mathcal P$, by simple calculations we get
\begin{equation}\label{ff}
    |f(x)-x_0|_p=|x-x_0|_p\cdot {|((x-x_0)+(x_0+{a\over 2}))^2+\alpha(x_0)|_p\over |((x-x_0)+(x_0+{a\over 2}))^2+\beta(x_0)|_p},
\end{equation}
where
$$\alpha(x)={8x^4+16ax^3+6(a^2+d)x^2+(6ad+a^3-c)x+a^2d+bd-ac\over 4(x^2+ax+d)}.$$

$$\beta(x)={2x^2+2ax+d+{a^2\over 4}}.$$
For $$\alpha=|\alpha(x_0)|_p, \ \ \beta=|\beta(x_0)|_p,$$
and
$$ \delta=|x_0+{a\over 2}|_p$$
we have that $\alpha$, $\beta$ and $\delta$ satisfy one of the following relations:

I.$\delta\leq \min\{\sqrt{\alpha},\sqrt{\beta}\}$.\ \

II.$\sqrt{\alpha}\leq \min\{\delta,\sqrt{\beta}\}$.\

III.$\sqrt{\beta}\leq \min\{\sqrt{\alpha},\delta\}$.

Consider the following functions:

For $\delta\leq \min\{\sqrt{\alpha},\sqrt{\beta}\}$ define the functions $\varphi_{i}: [0,+\infty)\to [0,+\infty),(i=1,2,3,4,5.)$ as the following

1. If $\delta<\sqrt{\alpha}<\sqrt{\beta}$, then
$$\varphi_{1}(r)=\left\{\begin{array}{lllll}
{\alpha\over\beta}r, \ \ {\rm if} \ \ r<\sqrt{\alpha},\\[2mm]
\alpha^*, \ \ {\rm if} \ \ r=\sqrt{\alpha},\\[2mm]
{r^3\over\beta}, \ \ {\rm if} \ \ \sqrt{\alpha}<r<\sqrt{\beta},\\[2mm]
\beta^*, \ \ {\rm if} \ \ r=\sqrt{\beta},\\[2mm]
r, \ \ \ \ {\rm if} \ \ r>\sqrt{\beta},
\end{array}
\right.
$$
where $\alpha^*$ and $\beta^*$ are some given numbers with $\alpha^*\leq{\alpha\sqrt{\alpha}\over\beta}$, $\beta^*\geq\sqrt{\beta}$.

2. If $\delta<\sqrt{\alpha}=\sqrt{\beta}$, then
$$\varphi_{2}(r)=\left\{\begin{array}{lllll}
r, \ \ \ \ {\rm if} \ \ r\neq\sqrt{\alpha},\\[2mm]
\hat\alpha, \ \ \ \ {\rm if} \ \ r=\sqrt{\alpha},
\end{array}
\right.
$$
where $\hat\alpha$ is a given number.

3. If $\delta=\sqrt{\alpha}<\sqrt{\beta}$, then
$$\varphi_{3}(r)=\left\{\begin{array}{lllll}
\lambda r, \ \ {\rm if} \ \ r<\delta,\\[2mm]
\delta^*, \ \ {\rm if} \ \ r=\delta,\\[2mm]
{r^3\over\beta}, \ \ {\rm if} \ \ \delta<r<\sqrt{\beta},\\[2mm]
\beta^*, \ \ {\rm if} \ \ r=\sqrt{\beta},\\[2mm]
r, \ \ \ \ {\rm if} \ \ r>\sqrt{\beta},
\end{array}
\right.
$$
where $\lambda\leq {\delta^2\over\beta} <1$, $\delta^*\leq{\delta^3\over\beta}$ and $\beta^*\geq\sqrt{\beta}$.

4. If $\delta<\sqrt{\beta}<\sqrt{\alpha}$, then
$$\varphi_{4}(r)=\left\{\begin{array}{lllll}
{\alpha\over\beta}r, \ \ {\rm if} \ \ r<\sqrt{\beta},\\[2mm]
\beta^*, \ \ {\rm if} \ \ r=\sqrt{\beta},\\[2mm]
{\alpha\over r}, \ \ {\rm if} \ \ \sqrt{\beta}<r<\sqrt{\alpha},\\[2mm]
\alpha^*, \ \ {\rm if} \ \ r=\sqrt{\alpha},\\[2mm]
r, \ \ \ \ {\rm if} \ \ r>\sqrt{\alpha},
\end{array}
\right.
$$
where $\alpha^*\leq{\sqrt{\alpha}}$, $\beta^*\geq{\alpha\over\sqrt{\beta}};$.

5. If $\delta=\sqrt{\beta}<\sqrt{\alpha}$, then
$$\varphi_{5}(r)=\left\{\begin{array}{lllll}
\lambda r, \ \ {\rm if} \ \ r<\delta,\\[2mm]
\delta^*, \ \ {\rm if} \ \ r=\delta,\\[2mm]
{\alpha\over r}, \ \ {\rm if} \ \ \delta<r<\sqrt{\alpha},\\[2mm]
\alpha^*, \ \ {\rm if} \ \ r=\sqrt{\alpha},\\[2mm]
r, \ \ \ \ {\rm if} \ \ r>\sqrt{\alpha},
\end{array}
\right.
$$
where $\lambda\geq {\delta^2\over\beta} >1$, $\delta^*\geq{\alpha\over\delta}$ and $\alpha^*\leq\sqrt{\alpha}$.

For $\sqrt{\alpha}\leq \min\{\delta,\sqrt{\beta}\}$ define the function $\phi_{j}: [0,+\infty)\to [0,+\infty)$ $(j=1,2,3)$ as the following

1. If $\sqrt{\alpha}<\delta<\sqrt{\beta}$, then
 $$\phi_{1}(r)=\left\{\begin{array}{lllll}
{\delta^2\over\beta}r, \ \ {\rm if} \ \ r<\delta,\\[2mm]
\delta', \ \ {\rm if} \ \ r=\delta,\\[2mm]
{r^3\over\beta}, \ \ {\rm if} \ \ \delta<r<\sqrt{\beta},\\[2mm]
\beta', \ \ {\rm if} \ \ r=\sqrt{\beta},\\[2mm]
r, \ \ \ \ {\rm if} \ \ r>\sqrt{\beta},
\end{array}
\right.
$$
where $\delta'$ and $\beta'$ some positive numbers with $\delta'\leq{\delta^3\over\beta}$, $\beta'\geq\sqrt{\beta}$.

2. If $\sqrt{\alpha}<\delta=\sqrt{\beta}$, then
 $$\phi_{2}(r)=\left\{\begin{array}{lllll}
\lambda r, \ \ {\rm if} \ \ r<\delta,\\[2mm]
\delta', \ \ {\rm if} \ \ r=\delta,\\[2mm]
r, \ \ \ \ {\rm if} \ \ r>\delta,
\end{array}
\right.
$$
where $\lambda\geq{1}$, $\delta'\leq\delta$.

3. If $\sqrt{\alpha}\leq\delta<\sqrt{\beta}$, then
$$\phi_{3}(r)=\left\{\begin{array}{lllll}
r, \ \ \ \ {\rm if} \ \ r\neq\delta,\\[2mm]
\hat\delta, \ \ \ \ {\rm if} \ \ r=\delta,
\end{array}
\right.
$$
where $\hat\delta$ is a given number.

For $\sqrt{\beta}\leq \min\{\sqrt{\alpha},\delta\}$ we define the function $\psi_{k}: [0,+\infty)\to [0,+\infty)$ $(k=1,2,3)$ as the following

1. If $\sqrt{\beta}<\delta<\sqrt{\alpha}$, then
$$\psi_{1}(r)=\left\{\begin{array}{ll}
{\alpha\over\delta^2}r, \ \ {\rm if} \ \ r<\delta,\\[2mm]
\delta^*, \ \ {\rm if} \ \ r=\delta,\\[2mm]
{\alpha\over r}, \ \ {\rm if} \ \ \delta<r<\sqrt{\alpha},\\[2mm]
\alpha^*, \ \ {\rm if} \ \ r=\sqrt{\alpha},\\[2mm]
r, \ \ \ \ {\rm if} \ \ r>\sqrt{\alpha},
\end{array}
\right.
$$
where $\delta^*\geq{\alpha\over\delta}$ , $\alpha^*\leq\sqrt\alpha$.

2. If $\sqrt{\beta}<\delta=\sqrt{\alpha}$ and $|\delta^2+\alpha|_p<\delta^2$, then $$\psi_{2}(r)=\left\{\begin{array}{ll}
\lambda r, \ \ {\rm if} \ \ r<\delta,\\[2mm]
\hat\delta, \ \ {\rm if} \ \ r=\delta,\\[2mm]
r, \ \ \ \ {\rm if} \ \ r>\delta,
\end{array}
\right.
$$
where $\hat\delta\geq\delta$ , $\lambda={|\delta^2+\alpha|_p\over\delta^2}$.

3. If $\sqrt{\beta}<\sqrt\alpha\leq\delta$ and $|\delta^2+\alpha|_p=\delta^2$, then $$\psi_{3}(r)=\left\{\begin{array}{ll}
r, \ \ \ \ {\rm if} \ \ r\neq\delta,\\[2mm]
\hat\delta, \ \ \ {\rm if} \ \ r=\delta,
\end{array}
\right.
$$
where $\hat\delta$ is a given number.

Using the formula (\ref{ff}) we easily get the following:

\begin{lemma}\label{lf} If $x\in S_r(x_0)$, then the following formula holds
$$|f^n(x)-x_0|_p=\left\{\begin{array}{lllllllllll}
\varphi_{1}^n(r), \ \ \mbox{if} \ \ \delta<\sqrt{\alpha}<\sqrt{\beta}\,\\[2mm]
\varphi_{2}^n(r), \ \ \mbox{if} \ \ \delta<\sqrt{\alpha}=\sqrt{\beta}\,\\[2mm]
\varphi_{3}^n(r), \ \ \mbox{if} \ \ \delta=\sqrt{\alpha}<\sqrt{\beta}\,\\[2mm]
\varphi_{4}^n(r), \ \ \mbox{if} \ \ \delta<\sqrt{\beta}<\sqrt{\alpha}\,\\[2mm]
\varphi_{5}^n(r), \ \ \mbox{if} \ \ \delta=\sqrt{\beta}<\sqrt{\alpha}\,\\[2mm]
\phi_{1}^n(r), \ \ \mbox{if} \ \ \sqrt{\alpha}<\delta<\sqrt{\beta}\,\\[2mm]
\phi_{2}^n(r), \ \ \mbox{if} \ \ \sqrt{\alpha}<\delta=\sqrt{\beta}\,\\[2mm]
\phi_{3}^n(r), \ \ \mbox{if} \ \ \sqrt{\alpha}\leq\sqrt{\beta}<\delta\,\\[2mm]
\psi_{1}^n(r), \ \ \mbox{if} \ \ \sqrt{\beta}<\delta<\sqrt{\alpha},\\[2mm]
\psi_{2}^n(r), \ \ \mbox{if} \ \ \sqrt{\beta}<\delta=\sqrt{\alpha},\\[2mm]
\psi_{3}^n(r), \ \ \mbox{if} \ \ \sqrt{\beta}<\sqrt{\alpha}\leq\delta.
\end{array}\right.$$
\end{lemma}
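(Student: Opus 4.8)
The plan is to establish the formula by induction on $n$, with the crucial observation being that each of the eleven auxiliary functions $\varphi_i,\phi_j,\psi_k$ maps its domain into itself, so that the dynamics of radiuses is a genuine (autonomous) one-dimensional dynamical system. First I would treat the base case $n=1$. Given $x\in S_r(x_0)$, I set $t=|x-x_0|_p=r$ and examine formula \eqref{ff}, which expresses $|f(x)-x_0|_p$ as $r$ times the ratio of $|((x-x_0)+(x_0+\tfrac a2))^2+\alpha(x_0)|_p$ to the corresponding expression with $\beta(x_0)$. Writing $u=(x-x_0)+(x_0+\tfrac a2)$, I note $|u|_p\le\max\{r,\delta\}$, and more precisely $|u|_p=\max\{r,\delta\}$ whenever $r\ne\delta$ by property (3.1) of the norm. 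Hence $|u^2|_p=\max\{r^2,\delta^2\}$ in that case, and by the strong triangle inequality $|u^2+\alpha(x_0)|_p$ and $|u^2+\beta(x_0)|_p$ are each determined by comparing $\max\{r^2,\delta^2\}$ with $\alpha$ and with $\beta$ respectively — they equal the max of the two quantities when these differ, and are only bounded above by the common value when they coincide. Running through the position of $r$ relative to $\delta,\sqrt\alpha,\sqrt\beta$ in each of the three regimes I, II, III and each sub-case, one recovers exactly the piecewise definitions of $\varphi_i$, $\phi_j$, $\psi_k$: the ``generic'' branches (where $r\notin\{\delta,\sqrt\alpha,\sqrt\beta\}$) are forced equalities, while the branches at $r=\delta,\sqrt\alpha,\sqrt\beta$ are precisely the places where the norm inequality $|x+y|_p\le|x|_p$ (case 3.2) gives only an inequality, which is why those values ($\alpha^*,\beta^*,\delta^*,\hat\alpha,\hat\delta,\delta',\beta'$) are introduced as free parameters rather than pinned down. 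This verifies $|f(x)-x_0|_p=g(r)$ where $g$ is the function attached to the relevant case.

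For the inductive step, suppose $|f^{n}(x)-x_0|_p=g^{n}(r)$ where $g$ is the appropriate one of the eleven functions. Let $y=f^{n}(x)$. Then $y\in S_{g^n(r)}(x_0)$, so the base case applied to $y$ with its radius $g^n(r)$ gives $|f(y)-x_0|_p=g(g^n(r))=g^{n+1}(r)$, i.e. $|f^{n+1}(x)-x_0|_p=g^{n+1}(r)$. The only subtlety that must be checked to make this step legitimate is that $g$ does not depend on which sphere $y$ sits in — that is, $\delta$, $\alpha$, $\beta$ are computed from the fixed point $x_0$ and the parameters $a,b,c,d$ alone, not from $x$ or $y$; this is manifest from their definitions, so the same function $g$ governs every iterate. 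One should also note that the iteration stays well-defined: a point of the forward orbit could in principle land in $\mathcal P$ (where $f$ is undefined), but formula \eqref{ff} and hence the whole statement is asserted for $x\in\C_p\setminus\mathcal P$ with the orbit avoiding $\mathcal P$, so this is not an obstruction to the stated lemma.

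The main obstacle is purely bookkeeping: correctly carrying out the case analysis in the base step so that every one of the eleven piecewise formulas emerges with the right thresholds and the right exponents (the $r^3/\beta$ branch, for instance, comes from $|u^2|_p=r^2>\beta$ in the numerator combined with $|u^2+\beta(x_0)|_p\le\beta$ in the denominator when $\sqrt\alpha<r<\sqrt\beta$, giving the ratio $r^2/\beta$ and thus $|f(x)-x_0|_p=r\cdot r^2/\beta=r^3/\beta$; similarly the $\alpha/r$ branch arises when $r>\sqrt\alpha$ so the numerator is pinned at $\alpha$ while the denominator is $r^2$). Each of the delicate branches at $r\in\{\delta,\sqrt\alpha,\sqrt\beta\}$ requires recognizing that, because two terms of equal norm are being added, only an upper bound survives, and checking that the stated constraints on $\alpha^*,\beta^*,\delta^*$ (e.g. $\alpha^*\le\alpha\sqrt\alpha/\beta$, $\beta^*\ge\sqrt\beta$) are exactly the bounds that \eqref{ff} produces together with the constraint that $g$ must map $[0,\infty)$ to $[0,\infty)$ consistently. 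Once the base case table is verified, the induction is immediate, so essentially all of the work is in the first step, and the lemma's proof amounts to a careful but routine verification case by case.
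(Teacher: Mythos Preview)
Your approach is exactly what the paper intends: it offers no proof beyond the sentence ``Using the formula \eqref{ff} we easily get the following,'' and your induction on $n$ together with the ultrametric case analysis of the numerator and denominator in \eqref{ff} is precisely that easy computation made explicit.

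Two small slips in your illustrative computations are worth correcting so the write-up is clean. For the $r^3/\beta$ branch of $\varphi_1$ (with $\sqrt\alpha<r<\sqrt\beta$) you wrote ``$|u^2|_p=r^2>\beta$'' in the numerator and ``$|u^2+\beta(x_0)|_p\le\beta$'' in the denominator; it should be $r^2>\alpha$ (so the numerator equals $r^2$) and $r^2<\beta$ (so the denominator equals $\beta$ exactly), giving the ratio $r^2/\beta$. Likewise, for the $\alpha/r$ branch of $\varphi_4$ you wrote ``arises when $r>\sqrt\alpha$''; in fact it is the range $\sqrt\beta<r<\sqrt\alpha$, where $r^2<\alpha$ forces the numerator to be $\alpha$ and $r^2>\beta$ forces the denominator to be $r^2$. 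These are bookkeeping typos, not gaps in the argument.
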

Thus the $p$-adic dynamical system $f^n(x), n\geq 1, x\in \C_p\setminus \mathcal P$ is
related to the real dynamical systems generated by $\varphi_{i}$, $\phi_{j}$ and $\psi_k$. Now we are going to study these (real) dynamical systems.

\begin{lemma}\label{l1a} The dynamical system generated by $\varphi_{i}(r),(i=1,2,3,4,5)$ has the following properties:
\begin{itemize}
\item[1.] ${\rm Fix}(\varphi_{i})=\{0\}\cup$
$$\left\{\begin{array}{lll}
\{r: r>\sqrt{\beta}\} \cup\{\beta^*:\ \ \mbox{if} \ \ \sqrt{\beta}=\beta^*\}, \ \ \mbox{for} \ \ \alpha<\beta, \ $i$=1,3,\\[2mm]
\{r: r\neq\sqrt{\alpha}\} \cup\{\hat\alpha:\ \ \mbox{if} \ \ \hat\alpha=\sqrt{\alpha}\}, \ \ \mbox{for} \ \ \alpha=\beta,\ \ $i$=2,\\[2mm]
\{r: r>\sqrt{\alpha}\} \cup\{\alpha^*:\ \ \mbox{if} \ \ \sqrt{\alpha}=\alpha^*\}, \ \ \mbox{for} \ \ \alpha>\beta, \ $i$=4,5.\\
\end{array}
\right.;$$

\item[2.] If $\alpha<\beta$, then for functions $\varphi_{i}(r), i=1,3$ we have
$$\lim_{n\to\infty}\varphi_{i}^n(r)=\left\{\begin{array}{lll}
0, \ \ \mbox{for all} \ \ r<\sqrt{\beta},\\[2mm]
r, \ \ \mbox{for all} \ \ r>\sqrt{\beta},\\[2mm]
\varphi_{i}(\beta^*), \ \ \mbox{if} \ \ r=\sqrt{\beta}
\end{array}\right.;
$$

\item[3.] If $\alpha=\beta$, then for function $\varphi_2(r)$
$$\lim_{n\to\infty}\varphi_{2}^n(r)=\left\{\begin{array}{lll}
r, \ \ \mbox{for all} \ \ r\neq\sqrt{\alpha},\\[2mm]
\varphi_2(\hat\alpha), \ \ \mbox{if} \ \ r=\sqrt{\alpha},
\end{array}\right.;
$$
\item[4.] If $\alpha>\beta$, then for functions $\varphi_{i}(r), i=4,5$ we have
$$\lim_{n\to\infty}\varphi_{i}^n(r)=\left\{\begin{array}{lll}
0, \ \ \mbox{if} \ \ \ r=0,\\[2mm]
\in C_i, \ \ \mbox{for all}  \ \ 0<r\leq\sqrt{\alpha},\\[2mm]
r, \ \ \mbox{for all} \ \ r>\sqrt{\alpha},
\end{array}\right.;
$$
where $C_4=\left(\sqrt{\alpha}, \alpha/\sqrt{\beta}\right)\cup \{\beta^*\}$, \ \ $C_5=\left(\sqrt{\alpha}, \alpha/\delta\right)\cup \{\delta^*\}$.
\end{itemize}
\end{lemma}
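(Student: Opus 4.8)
The plan is to split the argument according to which of $\alpha<\beta$, $\alpha=\beta$, $\alpha>\beta$ holds, because $\sqrt\alpha<\sqrt\beta$ forces $i\in\{1,3\}$, $\sqrt\alpha=\sqrt\beta$ forces $i=2$, and $\sqrt\beta<\sqrt\alpha$ forces $i\in\{4,5\}$; thus the three cases of assertions (2)--(4) are in bijection with the three families of functions, and within each family every claim reduces to elementary one-variable estimates on the explicit branches of $\varphi_i$.

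For assertion (1) I would solve $\varphi_i(r)=r$ branch by branch. On every linear branch $r\mapsto cr$ (with $c=\alpha/\beta$ or $c=\lambda$) the equation gives only $r=0$, since $c\neq1$ (here $\alpha\neq\beta$, and $\lambda\neq1$ for $\varphi_3,\varphi_5$); on the branch $r\mapsto r^3/\beta$ the only solutions are $r=0$ and $r=\sqrt\beta$, and $\sqrt\beta$ is not interior to $(\sqrt\alpha,\sqrt\beta)$; on the branch $r\mapsto\alpha/r$ the only solution is $r=\sqrt\alpha$, not interior; on the branch $r\mapsto r$ every point is fixed. It then remains to test the distinguished abscissae $\sqrt\alpha$, $\sqrt\beta$, $\delta$, where $\varphi_i$ takes one of the prescribed constants $\alpha^*,\beta^*,\delta^*,\hat\alpha$; the inequalities attached to these constants (e.g.\ $\alpha^*\le\alpha\sqrt\alpha/\beta<\sqrt\alpha$ when $\alpha<\beta$, $\beta^*\ge\sqrt\beta$, $\alpha^*\le\sqrt\alpha$ when $\alpha>\beta$) decide exactly when the constant coincides with the abscissa, which is the side-condition appearing in the stated form of ${\rm Fix}(\varphi_i)$.

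For assertions (2)--(4) the tail region $r>\sqrt\beta$ (resp.\ $r>\sqrt\alpha$) is pointwise fixed, so the limit there is $r$, and at the boundary abscissa the orbit jumps to the associated constant; the substance is the monotone behaviour on the remaining bounded interval. When $\alpha<\beta$ ($\varphi_1,\varphi_3$): on $(\sqrt\alpha,\sqrt\beta)$ one has $\varphi_i(r)=r^3/\beta<r$ because $r^2<\beta$, and $r\mapsto r^3/\beta$ has no fixed point in $[\sqrt\alpha,\sqrt\beta)$, so a decreasing orbit cannot converge inside that interval and hence after finitely many steps $r_n\le\sqrt\alpha$ (resp.\ $\le\delta$); there $\varphi_i$ is multiplication by $\alpha/\beta<1$ (resp.\ $\lambda<1$), so the orbit tends to $0$; together with $\varphi_i(\sqrt\alpha)=\alpha^*<\sqrt\alpha$, $\varphi_i(\delta)=\delta^*<\delta$ and $\varphi_i(\sqrt\beta)=\beta^*\ge\sqrt\beta$ (so $\beta^*$ is fixed), this gives limit $0$ for all $r<\sqrt\beta$ and limit $\varphi_i(\beta^*)$ at $r=\sqrt\beta$. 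When $\alpha=\beta$ ($\varphi_2$) the claim is immediate since the orbit of $\sqrt\alpha$ is $\sqrt\alpha,\hat\alpha,\hat\alpha,\dots$. When $\alpha>\beta$ ($\varphi_4,\varphi_5$): the linear branch near $0$ has factor $>1$ (resp.\ $\lambda>1$) and no fixed point in $(0,\sqrt\beta]$ (resp.\ $(0,\delta]$), so a strictly increasing orbit cannot stay there and after finitely many steps reaches $\ge\sqrt\beta$ (resp.\ $\ge\delta$); one application of $r\mapsto\alpha/r$ on $(\sqrt\beta,\sqrt\alpha)$ lands in the pointwise-fixed interval $(\sqrt\alpha,\alpha/\sqrt\beta)$, while if the orbit hits $\sqrt\beta$ (resp.\ $\delta$) it jumps to $\beta^*$ (resp.\ $\delta^*$), which by $\beta^*\ge\alpha/\sqrt\beta$ (resp.\ $\delta^*\ge\alpha/\delta$) is already fixed — so for every $0<r\le\sqrt\alpha$ the limit lies in $C_4=(\sqrt\alpha,\alpha/\sqrt\beta)\cup\{\beta^*\}$, resp.\ $C_5=(\sqrt\alpha,\alpha/\delta)\cup\{\delta^*\}$.

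The one place needing genuine care — and the main bookkeeping obstacle — is the accounting of the distinguished abscissae $\sqrt\alpha$, $\sqrt\beta$, $\delta$: one has to argue that an orbit meets them only finitely often and that each such meeting is followed by a jump to a prescribed constant whose defining inequality places it either among the fixed points or exactly one step from them. The two monotone-convergence arguments (contraction to $0$ below threshold when $\alpha<\beta$; forced escape into the fixed region when $\alpha>\beta$) are the conceptual core, and the rest is a finite, if tedious, verification across the branch configurations of $\varphi_1,\dots,\varphi_5$.
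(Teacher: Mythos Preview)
Your approach is essentially the same as the paper's: the paper's own proof is a three-line sketch saying that part~1 follows from ``a simple analysis of the equation $\varphi_i(r)=r$'', parts~2 and~3 from the fact that $\varphi_i$ ($i=1,2,3$) is increasing away from its discontinuities, and part~4 ``using graphs''. Your branch-by-branch solution of $\varphi_i(r)=r$ and your monotone-contraction / forced-escape arguments are precisely the content behind those phrases, written out in full; there is no methodological difference, only a difference in level of detail.
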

\begin{proof} 1. This is the result of a simple analysis of the equation $\varphi_{i}(r)=r$.

Proofs of parts 2,3 follow from the property that $\varphi_{i}(r)$, $i=1,2,3$ is an increasing
function (except at points of discontinuity). The part 4 easily can be proved using graphs of the corresponding functions $\varphi_{i}$, $i=4,5$.
\end{proof}
We note that for any $a\in C_i$ there exists $x\in (0,\sqrt{\alpha})$ such that $\varphi_i^m(x)=a$, $i=4,5$.

\begin{lemma}\label{lp} The dynamical system generated by $\phi_{j}(r),(j=1,2,3)$ has the following properties:
\begin{itemize}
\item[A.] ${\rm Fix}(\phi_{j})=\{0\}\cup$
$$\left\{\begin{array}{lll}
\{r: r>\sqrt{\beta}\} \cup\{\beta^*:\ \ \mbox{if} \ \ \beta^*=\beta\}, \ \ \mbox{for} \ \ \sqrt{\alpha}<\delta<\sqrt{\beta},\\[2mm]
\{r: r\neq\delta\} \cup\{\hat\delta:\ \ \mbox{if} \ \ \hat\delta=\delta\}, \ \ \mbox{for} \ \ \sqrt{\alpha}\leq\sqrt{\beta}<\delta,\\[2mm]
\{r: r\neq\delta\} \cup\{\delta^*:\ \ \mbox{if} \ \ \delta^*=\delta\}, \ \ \ \ \mbox{for} \ \ \sqrt{\alpha}<\sqrt{\beta}=\delta \ \ and  \ \ \delta^2=|\delta^2+\beta|_p,\\[2mm]
\{r: r>\delta\} \cup\{\delta^*:\ \ \mbox{if} \ \ \delta^*=\delta\}, \ \ \mbox{for} \ \ \sqrt{\alpha}<\sqrt{\beta}=\delta \ \ and  \ \ \delta^2>|\delta^2+\beta|_p.
\end{array}
\right.;$$

\item[B.] For function $\phi_{1}(r),$ we have
$$\lim_{n\to\infty}\phi_{1}^n(r)=\left\{\begin{array}{lll}
0, \ \ \mbox{for all} \ \ r<\sqrt{\beta},\\[2mm]
r, \ \ \mbox{for all} \ \ r>\sqrt{\beta},\\[2mm]
\phi_{1}(\beta^*), \ \ \mbox{if} \ \ r=\sqrt{\beta}
\end{array}\right.;
$$

\item[C.] For function $\phi_2(r)$:
\begin{itemize}
\item[C.a] If $\delta^2=|\delta^2+\beta|_p$, then
$$\lim_{n\to\infty}\phi_{2}^n(r)=\left\{\begin{array}{lll}
r, \ \ \mbox{for all} \ \ r\neq\delta,\\[2mm]
\phi_2(\delta^*), \ \ \mbox{if} \ \ r=\delta,
\end{array}\right.;
$$
\item[C.b] If $\delta^2>|\delta^2+\beta|_p$, then
$$\lim_{n\to\infty}\phi_{2}^n(r)=\left\{\begin{array}{lll}
0, \ \ \mbox{if} \ \ \ r=0,\\[2mm]
\in B, \ \ \mbox{for all}  \ \ 0<r\leq\delta,\\[2mm]
r, \ \ \mbox{for all} \ \ r>\delta,
\end{array}\right.;
$$
where $B=\left(\delta, \delta^3/|\delta^2+\beta|_p\right)$,
\end{itemize}
\item[D.] For function $\phi_{3}(r),$ we have
$$\lim_{n\to\infty}\phi_{3}^n(r)=\left\{\begin{array}{lll}
r, \ \ \mbox{for all} \ \ \ r\neq\delta,\\[2mm]
\phi_3(\hat\delta), \ \ \mbox{if} \ \ r=\delta
\end{array}\right..
$$
\end{itemize}
\end{lemma}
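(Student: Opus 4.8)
The plan is to analyse the three maps $\phi_1,\phi_2,\phi_3$ separately, in each case first solving $\phi_j(r)=r$ branch by branch to get (A) and then tracking the forward orbit of an arbitrary $r$ to get (B)--(D); since, away from its finitely many jumps, each $\phi_j$ is a monotone map whose only nontrivial branch is an explicit power law, the arguments are of the same short type as those behind Lemma \ref{l1a}. Dispose of $\phi_3$ first (case $\sqrt\alpha\le\sqrt\beta<\delta$): it is the identity on $[0,+\infty)\setminus\{\delta\}$ with $\phi_3(\delta)=\hat\delta$, so every $r\ne\delta$ is fixed, $\delta$ is fixed iff $\hat\delta=\delta$, and $\phi_3^n(\delta)=\hat\delta=\phi_3(\hat\delta)$ for $n\ge1$; this is (D) and the corresponding line of (A), with no estimates needed.

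For $\phi_1$ (case $\sqrt\alpha<\delta<\sqrt\beta$) one has $\delta^2/\beta<1$ and $r^3/\beta<r$ on $(0,\sqrt\beta)$, so a branchwise check gives $\phi_1(r)<r$ for every $r\in(0,\sqrt\beta)$ (consistent with $\delta'\le\delta^3/\beta<\delta$ at $r=\delta$), while $\phi_1$ fixes $0$, fixes every $r>\sqrt\beta$, and sends $\sqrt\beta$ to $\beta'\ge\sqrt\beta$; solving $\phi_1(r)=r$ on each branch gives the first line of (A). For (B): $r>\sqrt\beta$ is fixed and $r=\sqrt\beta$ lands after one step on $\beta'$, which is fixed; for $0<r<\sqrt\beta$ the sequence $r_n=\phi_1^n(r)$ stays in $(0,\sqrt\beta)$, is strictly decreasing, and hence converges to some $L\in[0,r)$. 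A short case check — substituting $s=r/\sqrt\beta$ turns the branch $r\mapsto r^3/\beta$ into $s\mapsto s^3$, so the orbit leaves $(\delta,\sqrt\beta)$ after finitely many steps and thereafter lies in $(0,\delta)$ where it is multiplied by $\delta^2/\beta<1$ — excludes $L=\sqrt\beta$, $L=\delta$ and every non-fixed $L\in(0,\sqrt\beta)$, forcing $L=0$.

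For $\phi_2$ (case $\sqrt\alpha<\delta=\sqrt\beta$) recall, from the way $\phi_2$ was extracted from \eqref{ff}, that the slope on the branch $r<\delta$ is $\lambda=\delta^2/|\delta^2+\beta|_p$; hence $\lambda=1$ exactly when $\delta^2=|\delta^2+\beta|_p$ and $\lambda>1$ exactly when $\delta^2>|\delta^2+\beta|_p$, and in the latter case $\delta^3/|\delta^2+\beta|_p=\lambda\delta$, i.e.\ $B=(\delta,\lambda\delta)$. If $\lambda=1$ then $\phi_2$ is the identity off $\delta$, which gives (C.a) and the third line of (A). If $\lambda>1$ then every $r>\delta$ is fixed, while for $0<r\le\delta$ the orbit is $r,\lambda r,\lambda^2r,\dots$ until a first term reaches or passes $\delta$; if that term $r_m=\lambda r_{m-1}$ exceeds $\delta$ then $\delta<r_m<\lambda\delta$, so $r_m\in B$ and $r_m$ is fixed, and the borderline case $r_m=\delta$ is handled by one more application ($\phi_2(\delta)=\delta'\le\delta$) followed by a repeat; $0$ is fixed throughout. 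This is (C.b) and the fourth line of (A).

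The step I expect to be the real obstacle is (C.b): one must make sure that on the branch $r<\delta$ the map really is multiplication by the single constant $\lambda=\delta^2/|\delta^2+\beta|_p$, so that the geometric-growth picture stays valid right up to the last term below $\delta$, and then pin the limit inside the half-open interval $B$ rather than merely at some value $\ge\delta$. Once that is in hand, the rest is the same branch-by-branch monotonicity bookkeeping as in Lemma \ref{l1a}, including the routine handling of the isolated values $\delta'$, $\beta'$, $\hat\delta$ and of whether they coincide with the thresholds $\delta$ or $\sqrt\beta$.
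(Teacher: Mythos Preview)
Your proposal is correct and follows exactly the approach the paper intends: the paper's own proof is the single sentence ``The proof consists simple analysis of the functions $\phi_{j}(r)$ using their graphs,'' and your branch-by-branch analysis is precisely that analysis written out. Your identification of $\lambda$ in $\phi_2$ with $\delta^2/|\delta^2+\beta|_p$ (so that the dichotomy $\lambda=1$ versus $\lambda>1$ matches the conditions in C.a/C.b) and your flagged concern about the borderline case $r_m=\delta$ in (C.b) are both more careful than anything the paper records.
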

\begin{proof} The proof consists simple analysis of the functions $\phi_{j}(r)$ using their graphs.
\end{proof}
The following lemma is obvious:

\begin{lemma}\label{lp1} The dynamical system generated by $\psi_{k}(r), (k=1,2,3)$ has the following properties:
\begin{itemize}
\item[(i)] ${\rm Fix}(\psi_{k})=\{0\}\cup$
$$\left\{\begin{array}{ll}
\{r: r>\sqrt{\alpha}\} \cup\{\alpha^*:\, {\rm if}\, \alpha^*=\alpha\}, \, {\rm for} \, \sqrt{\beta}<\delta<\sqrt{\alpha},\\[2mm]
\{r: r>\delta\} \cup\{\hat\delta:\, {\rm if}\, \hat\delta=\delta\}, \, {\rm for} \, \sqrt{\beta}<\delta=\sqrt{\alpha} \ \ and  \ \ \delta^2>|\delta^2+\alpha|_p,\\[2mm]
\{r: r\neq\delta\} \cup\{\hat\delta:\, {\rm if}\, \hat\delta=\delta\}, \ \ \, {\rm for} \, \sqrt{\beta}<\sqrt{\alpha}\leq\delta \ \ and  \ \ \delta^2=|\delta^2+\alpha|_p,
\end{array}
\right.;$$

\item[(ii)] For function $\psi_{1}(r),$ we have
$$\lim_{n\to\infty}\psi_{1}^n(r)=\left\{\begin{array}{lll}
0, \ \ \mbox{if} \ \ \ r=0,\\[2mm]
\in E, \ \ \mbox{for all}  \ \ 0<r\leq\sqrt{\alpha},\\[2mm]
r, \ \ \mbox{for all} \ \ r>\sqrt{\alpha},
\end{array}\right.;
$$
where $E=\left(\sqrt{\alpha}, \alpha/\delta\right)\cup \{\delta^*\}$.
\item[(iii)] For function $\psi_{2}(r)$
$$\lim_{n\to\infty}\psi_{2}^n(r)=\left\{\begin{array}{lll}
0, \ \ \mbox{for all} \ \ \ r<\delta,\\[2mm]
\psi_2(\hat\delta), \ \ \mbox{if}  \ \ r=\delta,\\[2mm]
r, \ \ \mbox{for all} \ \ r>\delta,
\end{array}\right.;
$$
\item[(iv)] For function $\psi_{3}(r)$
$$\lim_{n\to\infty}\psi_{3}^n(r)=\left\{\begin{array}{lll}
r, \ \ \mbox{for all} \ \ \ r\neq\delta,\\[2mm]
\psi_3(\hat\delta), \ \ \mbox{if}  \ \ r=\delta.
\end{array}\right.
$$
\end{itemize}
\end{lemma}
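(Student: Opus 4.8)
The plan is to read everything off the piecewise graphs of $\psi_1,\psi_2,\psi_3$, exactly as in the proofs of Lemmas~\ref{l1a} and \ref{lp}, keeping in mind the inequalities built into their definitions: for $\psi_1$ one is in the regime $\sqrt\beta<\delta<\sqrt\alpha$, so $\delta^2<\alpha$ and the inner slope $\alpha/\delta^2$ exceeds $1$, while $\delta^*\ge\alpha/\delta>\sqrt\alpha$ and $\alpha^*\le\sqrt\alpha$; for $\psi_2$ the inner slope is $\lambda=|\delta^2+\alpha|_p/\delta^2<1$ and $\hat\delta\ge\delta$; for $\psi_3$ the map is the identity except at the single point $\delta$. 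Note also that $\delta>\sqrt\beta\ge 0$ in all three cases, so $0$ is never the exceptional argument.

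First I would prove (i) by solving $\psi_k(r)=r$ on each branch. On a linear branch $r\mapsto cr$ with $c\in\{\alpha/\delta^2,\lambda\}\ne 1$ the only solution is $r=0$; on the branch $r\mapsto\alpha/r$ the equation forces $r=\sqrt\alpha$, which lies outside the open interval $(\delta,\sqrt\alpha)$ on which that branch is used and hence contributes nothing; on an identity branch every point is fixed; and at an exceptional argument the point is fixed iff the prescribed value ($\delta^*$, $\hat\delta$ or $\alpha^*$) equals that argument. Combining these, and using $\delta^*\ge\alpha/\delta>\sqrt\alpha>\delta$ (so $\delta\notin\mathrm{Fix}(\psi_1)$) together with $0\ne\delta$, gives the three displayed formulas for $\mathrm{Fix}(\psi_k)$.

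Then I would prove (ii)--(iv) by following a single orbit through the pieces. For $\psi_3$ this is immediate: any $r\ne\delta$ is fixed and $\delta\mapsto\hat\delta$, which is itself fixed (or equals $\delta$), so the limit is $\psi_3(\hat\delta)$. For $\psi_2$: on $(\delta,\infty)$ the map is the identity; for $0\le r<\delta$ one has $\psi_2^{\,n}(r)=\lambda^n r\to 0$ since $0<\lambda<1$ and the orbit never leaves $[0,\delta)$; and $\delta\mapsto\hat\delta\in\mathrm{Fix}(\psi_2)$, which gives the stated limit $\psi_2(\hat\delta)$. For $\psi_1$: it is the identity on $(\sqrt\alpha,\infty)$, so only $0<r\le\sqrt\alpha$ needs attention. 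Since the inner slope $\alpha/\delta^2$ is $>1$, the iterates $(\alpha/\delta^2)^n r$ of a point $r\in(0,\delta)$ strictly increase and so leave $(0,\delta)$ after finitely many steps, the first overshoot landing in $[\delta,\alpha/\delta)$; from there --- and directly from any starting point in $[\delta,\sqrt\alpha]$ --- one checks: $\delta\mapsto\delta^*\in\mathrm{Fix}(\psi_1)$; a point of $(\delta,\sqrt\alpha)$ is carried by $r\mapsto\alpha/r$ into $(\sqrt\alpha,\alpha/\delta)$, where $\psi_1=\mathrm{id}$; a point of $(\sqrt\alpha,\alpha/\delta)$ is already fixed; and $\sqrt\alpha\mapsto\alpha^*\le\sqrt\alpha$, after which one re-runs the same analysis. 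Hence for every $0<r\le\sqrt\alpha$ the orbit is eventually constant with value in $E=(\sqrt\alpha,\alpha/\delta)\cup\{\delta^*\}$, and, conversely, every point of $E$ occurs as such a limit (as in the remark after Lemma~\ref{l1a}).

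The only genuine bookkeeping --- the step where a slip is most likely --- is the treatment of the exceptional arguments $r=\delta$ and $r=\sqrt\alpha$ of $\psi_1$ (and $r=\delta$ for $\psi_2,\psi_3$), where the map jumps to the free value $\delta^*$, $\alpha^*$ or $\hat\delta$: one must check that these jumps land in $\mathrm{Fix}(\psi_k)$ or in $E$, and, for the $r=\sqrt\alpha$ branch of $\psi_1$, that the orbit does not cycle back through $\sqrt\alpha$ indefinitely (excluded once $\alpha^*$ is taken generically, i.e.\ not on a backward $\alpha/\delta^2$-orbit of $\sqrt\alpha$). Everything else is a direct reading of the graphs, which is why the lemma is flagged as obvious.
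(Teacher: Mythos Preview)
Your proposal is correct and follows exactly the approach the paper has in mind: the paper gives no proof at all for this lemma, simply declaring it ``obvious'' (in the same spirit as the graph-reading proofs of Lemmas~\ref{l1a} and~\ref{lp}), and your piecewise analysis of $\psi_1,\psi_2,\psi_3$ is precisely the argument one would write out if asked to justify that word. Your flagging of the borderline case $r=\sqrt\alpha$ for $\psi_1$ (where $\alpha^*\le\sqrt\alpha$ could in principle feed back) is a genuine subtlety the paper glosses over, but it does not affect the claimed result at the level of rigor the paper is working at.
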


Now we shall apply these lemmas to the study of the $p$-adic dynamical system generated by $f$.

Using Lemma \ref{lf} and Lemma \ref{l1a} we obtain the following

 \begin{thm}\label{t1a} If $\delta\leq\min\{\sqrt{\alpha},\sqrt{\beta}\}$ and $x\in S_r(x_0)$,  then
 the $p$-adic dynamical system generated by $f$ has the following properties:
\begin{itemize}
\item[1.] The following spheres are invariant with respect to $f$:
$$\begin{array}{lll}
S_r(x_0), \ \ \mbox{if} \ \ r>\sqrt{\max\{\alpha,\beta\}} \ \ and \ \ \alpha\neq\beta,\\[2mm]
S_r(x_0), \ \ \mbox{if} \ \ r\neq\sqrt{\alpha}, \,and \ \ \alpha=\beta,
\end{array};$$
\item[2.] For $\alpha<\beta$, we have
$$\lim_{n\to\infty}f^n(x)=x_0, \ \ \mbox{for all} \ \ r<\sqrt{\beta},$$
$$f\left(S_r(x_0)\setminus\mathcal P\right)\subset S_r(x_0), \ \ \mbox{for all} \ \ r>\sqrt{\beta},$$
$$\lim_{n\to \infty}f^n(x)\in \left\{\begin{array}{ll}
S_{\varphi_{1}(\beta^*)}(x_0), \ \ \mbox{if} \ \ \delta<\sqrt{\alpha}, \ \ r=\sqrt{\beta},\\[2mm]
S_{\varphi_{3}(\beta^*)}(x_0), \ \ \mbox{if} \ \ \delta=\sqrt{\alpha}, \ \ r=\sqrt{\beta};
\end{array}\right.
$$
\item[3.] If $\alpha=\beta$, then
$$f\left(S_r(x_0)\setminus\mathcal P\right)\subset S_r(x_0), \ \ \mbox{for all} \ \ r\neq\sqrt{\alpha},$$
$$\lim_{n\to \infty}f^n(x)\in S_{\varphi_{2}(\hat\alpha)}(x_0), \ \ \mbox{if} \ \ r=\sqrt{\alpha};$$
\item[4.] If $\alpha>\beta$, then
$$\lim_{n\to \infty}f^n(x)\in S_{\r}(x_0), \ \r\in \left\{\begin{array}{ll}
C_4, \ \ \mbox{if} \ \ \delta<\sqrt{\beta},\\[2mm]
C_5, \ \ \mbox{if} \ \ \delta=\sqrt{\beta}
\end{array}
\right., \ \
  0<r\leq\sqrt{\alpha},$$
$$f\left(S_r(x_0)\setminus\mathcal P\right)\subset S_r(x_0), \ \ \mbox{for all} \ \ r\neq\sqrt{\alpha}.$$
\end{itemize}
\end{thm}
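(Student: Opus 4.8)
The plan is to obtain the theorem by transporting Lemmas~\ref{lf} and~\ref{l1a} through the ``radius dictionary'' $y\mapsto|y-x_0|_p$. First I would note that the hypothesis $\delta\le\min\{\sqrt{\alpha},\sqrt{\beta}\}$ subdivides into exactly the five mutually exclusive subcases carrying the functions $\varphi_1,\dots,\varphi_5$ in Lemma~\ref{lf}, and that these subcases are grouped by the sign of $\alpha-\beta$: the subcases of $\varphi_1,\varphi_3$ occur precisely when $\alpha<\beta$, that of $\varphi_2$ when $\alpha=\beta$, and those of $\varphi_4,\varphi_5$ when $\alpha>\beta$. Throughout one restricts to $x\in S_r(x_0)\setminus\mathcal P$, so that the whole forward orbit $f^n(x)$, $n\ge 0$, is defined; for $x\in\mathcal P$ there is nothing to prove.

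Next I would fix such an $x$ and let $\varphi_i$ denote the function attached to the ambient subcase. By Lemma~\ref{lf}, $|f^n(x)-x_0|_p=\varphi_i^n(r)$ for every $n$, so the forward orbit of $x$, viewed through its radii about $x_0$, is exactly the $\varphi_i$-orbit of $r$. From this I would read off three elementary translations: (i) $f\bigl(S_r(x_0)\setminus\mathcal P\bigr)\subset S_r(x_0)$ if and only if $\varphi_i(r)=r$, that is $r\in{\rm Fix}(\varphi_i)$; (ii) $\lim_{n\to\infty}f^n(x)=x_0$ if and only if $\lim_{n\to\infty}\varphi_i^n(r)=0$; (iii) if $\varphi_i^n(r)\to\rho$ with $\rho>0$, then, inspecting the piecewise definition of $\varphi_i$, the orbit of $r$ is eventually constant equal to $\rho$ — once it reaches the branch ``$\varphi_i(r)=r$'' the radius is frozen — hence $|f^n(x)-x_0|_p=\rho$ for all large $n$, which is precisely the meaning given here to $\lim_{n\to\infty}f^n(x)\in S_\rho(x_0)$.

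With this dictionary the four parts follow. Part~1 is the union of ${\rm Fix}(\varphi_i)\setminus\{0\}$ over the subcases compatible with the prescribed relation between $\alpha$ and $\beta$; by Lemma~\ref{l1a}(1) this equals $\{r:r>\sqrt{\max\{\alpha,\beta\}}\}$ when $\alpha\ne\beta$, and $\{r:r\ne\sqrt{\alpha}\}$ when $\alpha=\beta$. Parts~2, 3 and~4 are, respectively, Lemma~\ref{l1a}(2), (3) and~(4) pushed through (i)--(iii); the only extra bookkeeping is that the relevant index is $\varphi_1$ or $\varphi_3$ (resp.\ $\varphi_4$ or $\varphi_5$) according as $\delta<\sqrt{\alpha}$ or $\delta=\sqrt{\alpha}$ (resp.\ $\delta<\sqrt{\beta}$ or $\delta=\sqrt{\beta}$), which is exactly what selects the limiting radius $\varphi_1(\beta^*)$ versus $\varphi_3(\beta^*)$, the set $C_4$ versus $C_5$, and so on.

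The one step that is more than formal is~(iii): passing from control of $|f^n(x)-x_0|_p$ alone to a statement about the sequence $f^n(x)$ in $\C_p$. For $\rho=0$ this is literally the definition of convergence to $x_0$; for $\rho>0$ one cannot assert that $f^n(x)$ converges in $\C_p$, only that its tail lies on the single sphere $S_\rho(x_0)$. I expect the genuine — though routine — content to be the verification behind the ``eventually constant'' claim, i.e.\ that the geometric branches $r\mapsto r^3/\beta$ and $r\mapsto\alpha/r$ leave their defining intervals after finitely many steps (except in the purely attracting regime, where $\rho=0$), and that verification is already carried out inside Lemma~\ref{l1a}.
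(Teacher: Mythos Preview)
Your proposal is correct and follows exactly the paper's own approach: the theorem is stated as an immediate consequence of Lemmas~\ref{lf} and~\ref{l1a}, with no additional argument given. Your write-up simply spells out the ``radius dictionary'' translation that the paper leaves implicit.
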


By Lemma \ref{lf} and Lemma \ref{lp} we obtain the following

\begin{thm}\label{tlp} If $\sqrt{\alpha}\leq\min\{\delta,\sqrt{\beta}\}$ and $x\in S_r(x_0)$, then the $p$-adic dynamical system generated by $f$ has the following properties:
\begin{itemize}

\item[A.] The following spheres are invariant:
$$S_r(x_0), \ \ \mbox{if} \ \  \sqrt{\alpha}<\delta<\sqrt{\beta},\ \ r>\sqrt{\beta},$$
$$S_r(x_0), \ \ \mbox{if} \ \ \sqrt{\alpha}\leq\sqrt{\beta}<\delta,\ \ r\neq\delta,$$
$$S_r(x_0), \ \ \mbox{if} \ \ \sqrt{\alpha}<\sqrt{\beta}=\delta, \ \ \delta^2=|\delta^2+\beta|_p \ \ and \ \ r\neq\delta,$$
$$S_r(x_0), \ \ \mbox{if} \ \ \sqrt{\alpha}<\sqrt{\beta}=\delta, \ \ \delta^2>|\delta^2+\beta|_p \ \ and \ \ r>\delta,$$
\item[B.] For $\sqrt{\alpha}<\delta<\sqrt{\beta}$, we have
$$\lim_{n\to\infty}f^n(x)=x_0, \ \ \mbox{for all} \ \ r<\sqrt{\beta},$$
$$f\left(S_r(x_0)\setminus\mathcal P\right)\subset S_r(x_0), \ \ \mbox{for all} \ \ r>\sqrt{\beta},$$
$$\lim_{n\to \infty}f^n(x)\in S_{\phi_{1}(\beta^*)}(x_0), \ \ \mbox{if} \ \ r=\sqrt{\beta};$$
\item[C.] Let $\sqrt{\alpha}<\sqrt{\beta}=\delta$.
\begin{itemize}

 \item[C.a)] If $\delta^2=|\delta^2+\beta|_p$, then
$$f\left(S_r(x_0)\setminus\mathcal P\right)\subset S_r(x_0),\ \ \mbox{for any}\ \ r\neq \delta,$$
$$\lim_{n\to \infty}f^n(x)\in S_{\phi_{2}(\delta^*)}(x_0), \ \ \mbox{if} \ \ r=\delta;$$

\item[C.b)] If $\delta^2>|\delta^2+\beta|_p$, then
$$\lim_{n\to \infty}f^n(x)\in S_\m (x_0),\ \m\in B, \ \ \mbox{for any} \ \ 0<r\leq\delta,$$
$$f\left(S_r(x_0)\setminus\mathcal P\right)\subset S_r(x_0),\ \ \mbox{for any}\ \ r>\delta;$$
\end{itemize}
\item[D.] If $\sqrt{\alpha}\leq\sqrt{\beta}<\delta$, then
$$f\left(S_r(x_0)\setminus\mathcal P\right)\subset S_r(x_0),\ \ \mbox{for any}\ \ r\neq \delta,$$
$$\lim_{n\to \infty}f^n(x)\in S_{\phi_{3}(\hat\delta)}(x_0), \ \ \mbox{if} \ \ r=\delta.$$
\end{itemize}
\end{thm}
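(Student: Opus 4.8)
The proof is a translation of the real-dynamics statements of Lemma~\ref{lp} into assertions about $f$, with Lemma~\ref{lf} serving as the dictionary. Fix $x\in S_r(x_0)\setminus\mathcal P$ and put $r_n=|f^n(x)-x_0|_p$. Under the hypothesis $\sqrt{\alpha}\le\min\{\delta,\sqrt{\beta}\}$ exactly one of the chains $\sqrt{\alpha}<\delta<\sqrt{\beta}$, $\sqrt{\alpha}<\delta=\sqrt{\beta}$, $\sqrt{\alpha}\le\sqrt{\beta}<\delta$ holds, and by Lemma~\ref{lf} this selects the one-dimensional map $\phi_j$ ($j=1,2,3$ respectively) for which $r_n=\phi_j^n(r)$ as long as $f^n(x)$ is defined. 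Thus every conclusion of Lemma~\ref{lp} about the real orbit $\{r_n\}$ is automatically a conclusion about $\{f^n(x)\}$: if $\phi_j(r)=r$ then $f(x)\in S_r(x_0)$; if $\phi_j^n(r)\to 0$ then $f^n(x)\to x_0$; and if $\phi_j^n(r)$ is eventually equal to some $\rho$ (which happens at the exceptional radii, because $\phi_j$ fixes every point above the relevant threshold and lands there in one step) then $f^n(x)\in S_\rho(x_0)$ for all large $n$, hence every limit point of $\{f^n(x)\}$ in $\C_p$ lies on $S_\rho(x_0)$.

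Carrying this out is routine. Part~A follows from part~A of Lemma~\ref{lp}, since $S_r(x_0)$ is $f$-invariant exactly when $r\in{\rm Fix}(\phi_j)$; reading off ${\rm Fix}(\phi_j)$ in the four regimes --- including the split $\delta^2=|\delta^2+\beta|_p$ versus $\delta^2>|\delta^2+\beta|_p$ in the boundary case $\sqrt{\alpha}<\sqrt{\beta}=\delta$ --- gives the four invariance statements. Part~B corresponds to $\phi_1$: part~B of Lemma~\ref{lp} gives $\phi_1^n(r)\to0$ for $r<\sqrt{\beta}$, so $f^n(x)\to x_0$; gives $\phi_1(r)=r$ for $r>\sqrt{\beta}$, so $f(S_r(x_0)\setminus\mathcal P)\subset S_r(x_0)$; and gives $\phi_1^n(\sqrt{\beta})\to\phi_1(\beta^*)$, placing the limit points on $S_{\phi_1(\beta^*)}(x_0)$. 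Parts~C and~D follow the same way from parts~C.a, C.b, D of Lemma~\ref{lp} applied to $\phi_2$ and $\phi_3$; in case~C.b the accumulation interval $B=(\delta,\delta^3/|\delta^2+\beta|_p)$ of the real system becomes the family of spheres $S_\mu(x_0)$, $\mu\in B$, and in case~D the value $\phi_3(\hat\delta)$ is the target sphere for $r=\delta$.

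The only matters requiring care are bookkeeping. First, the exceptional set $\mathcal P$: once an iterate hits a pole $\hat x_{1,2}$ the orbit terminates, which is why the transport statements are stated for $S_r(x_0)\setminus\mathcal P$; on a genuinely invariant sphere one checks in addition that $S_r(x_0)$ does not meet $\mathcal P$. Second, the symbol $\lim_{n\to\infty}f^n(x)\in S_\rho(x_0)$ is to be read through the real sequence $r_n$: it says $r_n=\rho$ for all large $n$, equivalently that every limit point of $\{f^n(x)\}$ lies on $S_\rho(x_0)$, not that $f^n(x)$ converges in $\C_p$. With these conventions the theorem is an immediate corollary of Lemmas~\ref{lf} and~\ref{lp}, requiring no computation beyond~(\ref{ff}). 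The only genuine obstacle is correctly matching each of the eleven cases of Lemma~\ref{lf} to the hypothesis and handling the boundary radii $r=\sqrt{\beta}$ and $r=\delta$, where a single application of the discontinuous map $\phi_j$ carries the radius into the fixed-point region.
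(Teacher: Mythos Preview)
Your proposal is correct and follows exactly the paper's approach: the theorem is stated there as an immediate consequence of Lemma~\ref{lf} and Lemma~\ref{lp}, with no further argument given. Your write-up simply spells out the translation step the paper leaves implicit.
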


By Lemma \ref{lf} and Lemma \ref{lp1} we get

\begin{thm}\label{tlp1} If $\sqrt{\beta}\leq\min\{\delta,\sqrt{\alpha}\}$, and $x\in S_r(x_0)$, then the dynamical system generated by $f$ has the following properties:
\begin{itemize}
\item[(i)] The following spheres are invariant:
$$S_r(x_0), \ \ if \ \  \delta<\sqrt{\alpha},\ \ r>\sqrt{\alpha},$$
$$S_r(x_0), \, {\rm if} \, \delta=\sqrt{\alpha}, \ \delta^{2}>|\delta^2+\alpha|_p, \ \ r>\delta,$$
$$S_r(x_0), \, {\rm if} \, \delta\geq\sqrt{\alpha}, \ \ \delta^2=|\delta^2+\beta|_p,\ \ r\neq\delta;$$
\item[(ii)] Let $\delta<\sqrt{\alpha}$. Then
$$\lim_{n\to \infty}f^n(x)\in S_\n (x_0),\ \n\in E, \ \ \mbox{for any} \ \ 0<r\leq\sqrt{\alpha},$$
$$f\left(S_r(x_0)\setminus\mathcal P\right)\subset S_r(x_0),\ \ \mbox{for any}\ \ r>\sqrt{\alpha};$$
\item[(iii)] If $\delta=\sqrt{\alpha}$  \ \ and  \ \ $\delta^2>|\delta^2+\alpha|_p$, then
$$\lim_{n\to\infty}f^n(x)=x_0, \ \ \mbox{for all} \ \ r<\delta,$$
$$\lim_{n\to \infty}f^n(x)\in S_{\psi_{2}(\hat\delta)}(x_0), \ \ \mbox{if} \ \ r=\delta;$$
$$f\left(S_r(x_0)\setminus\mathcal P\right)\subset S_r(x_0),\ \ \mbox{for any}\ \ r>\delta;$$
\item[(iv)] If $\delta\geq\sqrt{\alpha}$  \ \ and  \ \ $\delta^2=|\delta^2+\alpha|_p$, then
$$\lim_{n\to \infty}f^n(x)\in S_{\psi_{3}(\hat\delta)}(x_0), \ \ \mbox{if} \ \ r=\delta;$$
$$f\left(S_r(x_0)\setminus\mathcal P\right)\subset S_r(x_0),\ \ \mbox{for any}\ \ r\neq\delta.$$
\end{itemize}
\end{thm}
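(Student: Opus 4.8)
The plan is to deduce the theorem from the two facts already at hand: Lemma~\ref{lf}, which turns the $p$-adic orbit of $x$ into the orbit of its radius $|x-x_0|_p$ under a real map, and Lemma~\ref{lp1}, which describes that real map. Fix $x\in S_r(x_0)\setminus\mathcal P$. Since $x\notin\mathcal P$, no iterate $f^n(x)$ is a pole $\hat x_{1,2}$, so every $f^n(x)$ is defined and formula~(\ref{ff}) may be applied at each step. Under the hypothesis $\sqrt{\beta}\le\min\{\delta,\sqrt{\alpha}\}$, apart from the degenerate cases $\delta=\sqrt{\beta}$ and $\sqrt{\alpha}=\sqrt{\beta}$ (which by Lemma~\ref{lf} fall under $\varphi_5$ and $\phi_3$, hence under Theorems~\ref{t1a} and~\ref{tlp}), exactly one of the three sub-cases $\sqrt{\beta}<\delta<\sqrt{\alpha}$, $\sqrt{\beta}<\delta=\sqrt{\alpha}$, $\sqrt{\beta}<\sqrt{\alpha}\le\delta$ occurs, and Lemma~\ref{lf} gives $|f^n(x)-x_0|_p=\psi_k^n(r)$ with $k=1,2,3$ respectively. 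In the case $\delta=\sqrt{\alpha}$ the choice $k=2$ or $k=3$ is dictated by whether there is cancellation in $(x_0+a/2)^2+\alpha(x_0)$, i.e.\ by $\delta^2>|\delta^2+\alpha|_p$ or $\delta^2=|\delta^2+\alpha|_p$ --- precisely the conditions defining $\psi_2$ and $\psi_3$. Thus the radii of the trajectory of $x$ form the $\psi_k$-orbit of $r$, and the theorem follows by rereading Lemma~\ref{lp1} in the language of spheres.

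For part~(i): $f(S_r(x_0)\setminus\mathcal P)\subset S_r(x_0)$ holds exactly when $|f(x)-x_0|_p=r$ for all such $x$, i.e.\ when $\psi_k(r)=r$; so the invariant spheres are the $S_r(x_0)$ with $r\in{\rm Fix}(\psi_k)$, $r>0$, and Lemma~\ref{lp1}(i) identifies these as $r>\sqrt{\alpha}$ for $\delta<\sqrt{\alpha}$, as $r>\delta$ for $\delta=\sqrt{\alpha}$ with $\delta^2>|\delta^2+\alpha|_p$, and as $r\neq\delta$ for $\delta\ge\sqrt{\alpha}$ with $\delta^2=|\delta^2+\alpha|_p$ (up to the boundary values recorded in the lemma). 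For the limits: if $\psi_k^n(r)\to0$ then $|f^n(x)-x_0|_p\to0$, i.e.\ $f^n(x)\to x_0$, which gives the statement $\lim_{n\to\infty}f^n(x)=x_0$ for $r<\delta$ in~(iii). In the remaining regimes --- the map $\psi_1$ on $0<r\le\sqrt{\alpha}$, and the branches of $\psi_2,\psi_3$ at $r=\delta$ --- the real orbit of $r$ is, by Lemma~\ref{lp1}, eventually constant, equal to some $\rho$ in $E$ (respectively to $\psi_2(\hat\delta)$, to $\psi_3(\hat\delta)$), because each such limiting value is itself a fixed point of $\psi_k$; hence $|f^n(x)-x_0|_p=\rho$ for all large $n$, so the trajectory enters the sphere $S_\rho(x_0)$ and stays there forever after. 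This yields $\lim_{n\to\infty}f^n(x)\in S_\nu(x_0)$, $\nu\in E$, in~(ii); $\lim_{n\to\infty}f^n(x)\in S_{\psi_2(\hat\delta)}(x_0)$ in~(iii); and $\lim_{n\to\infty}f^n(x)\in S_{\psi_3(\hat\delta)}(x_0)$ in~(iv). The remaining sphere-invariance lines of~(ii)--(iv) are again the $r>\sqrt{\alpha}$ (resp.\ $r>\delta$, $r\neq\delta$) fixed points of $\psi_k$.

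There is no genuine obstacle left: the analytic substance lies entirely in Lemma~\ref{lf} (the identity~(\ref{ff}) and the case split producing the $\psi_k$) and in the elementary one-dimensional analysis behind Lemma~\ref{lp1}. The two points that do need care are bookkeeping rather than mathematics. First, $\mathcal P$ must be excised at the start, so that~(\ref{ff}) can legitimately be iterated and $\psi_k^n(r)$ genuinely equals $|f^n(x)-x_0|_p$ at every step. Second, ``$\lim_{n\to\infty}f^n(x)\in S_\rho(x_0)$'' must be read ultrametrically: that the norms $|f^n(x)-x_0|_p$ eventually stabilize at $\rho$ does not force the points $f^n(x)$ themselves to converge, so the assertion means only that the trajectory is eventually trapped in the single sphere $S_\rho(x_0)$ --- which is exactly what the eventual constancy of the $\psi_k$-orbit delivers. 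The fiddliest step is sorting the degenerate relations correctly: sending $\delta=\sqrt{\beta}$ and $\sqrt{\alpha}=\sqrt{\beta}$ to the companion Theorems~\ref{t1a} and~\ref{tlp}, and pairing the case $\delta=\sqrt{\alpha}$ with the cancellation dichotomy that selects $\psi_2$ over $\psi_3$.
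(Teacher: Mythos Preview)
Your proposal is correct and follows exactly the approach of the paper: the theorem is obtained by combining Lemma~\ref{lf} with Lemma~\ref{lp1}, translating the real dynamics of $\psi_k$ on radii back into statements about spheres. In fact your write-up is considerably more explicit than the paper's own proof, which consists of the single line ``By Lemma~\ref{lf} and Lemma~\ref{lp1} we get''; your remarks on excising $\mathcal P$, on the ultrametric reading of ``$\lim f^n(x)\in S_\rho(x_0)$'', and on routing the degenerate equalities to the companion theorems are all useful clarifications that the paper leaves implicit.
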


\section*{ Acknowledgements}

 The first author thanks the Department of Algebra, University of
Santiago de Compostela, Spain,  for providing financial support of
his many visits to the Department. He was also supported by the Grant No.0251/GF3 of Education and Science Ministry of Republic
of Kazakhstan.


\begin{thebibliography}{999}

\bibitem{ARS} S. Albeverio, U.A. Rozikov, I.A. Sattarov. $p$-adic $(2,1)$-rational dynamical systems. {\it Jour. Math. Anal. Appl.} {\bf 398}(2) (2013), 553--566.

\bibitem{A1} S. Albeverio, A. Khrennikov, P.E. Kloeden, Memory retrieval as a $p$-adic dynamical system, {\it BioSys}. {\bf 49} (1999), 105-–115.

\bibitem{A2} S. Albeverio, A. Khrennikov, B. Tirozzi, S. De Smedt, $p$-adic dynamical systems, {\it Theor. Math. Phys}. {\bf 114} (1998), 276--287.

\bibitem{AK09} V. Anashin, A. Khrennikov. {\it Applied Algebraic Dynamics}, V. 49,
de Gruyter Expositions in Mathematics. Walter de Gruyter, Berlin — New York, 2009.

\bibitem{AKY11} V. S. Anashin, A. Yu. Khrennikov, and E. I. Yurova. Characterization of ergodicity of
$p$-adic dynamical systems by using van der Put basis. {\it Doklady Mathematics}, {\bf 83}(3) (2011), 306--
308.

\bibitem{Ana94} V. S. Anashin.  Uniformly distributed sequences of $p$-adic integers. {\it Math. Notes},
{\bf 55}(2) (1994), 109--133.

\bibitem{Ana98} V. S. Anashin.  Uniformly distributed sequences in computer algebra, or how to construct
program generators of random numbers. {\it J. Math. Sci}., {\bf 89}(4) (1998), 1355--1390.

\bibitem{Ana10} V. Anashin.  Non-Archimedean ergodic theory and pseudorandom generators. {\it The Computer
Journal}, {\bf 53}(4) (2010), 370--392.

\bibitem{Ana12} V. Anashin.  Automata finiteness criterion in terms of van der Put series of automata
functions. {\it $p$-Adic Numbers, Ultrametric Analysis and Applications}, {\bf 4}(2) (2012), 151--160.

\bibitem{CS} G. Call and J. Silverman, Canonical height on varieties with morphisms,
{\it Compositio Math}. {\bf 89}(1993), 163-205.

\bibitem{FL11} A.-H. Fan and L.-M. Liao.  On minimal decomposition of $p$-adic polynomial dynamical
systems. {\it Adv. Math.}, {\bf 228} (2011), 2116--2144.

\bibitem{KMa} M. Khamraev, F.M. Mukhamedov, On a class of rational $p$-adic dynamical systems, {\it J. Math. Anal. Appl}. {\bf 315} (1) (2006), 76-–89.

\bibitem{K} A. Yu. Khrennikov, $p$-Adic valued distributions in mathematical physics, Kluwer, Dordrecht (1994).

\bibitem{KLPS09} J. Kingsbery, A. Levin, A. Preygel, and C. E. Silva.  On measure-preserving c1 transformations
of compact-open subsets of non-archimedean local fields. {\it Trans. Amer. Math.
Soc.}, {\bf 361}(1) (2009), 61--85.

\bibitem{KLPS11} J. Kingsbery, A. Levin, A. Preygel, and C. E. Silva.  Dynamics of the $p$-adic shift and
applications. {\it Disc. Contin. Dyn. Syst.}, {\bf 30}(1) (2011), 209–218.

\bibitem{LSY12} D. Lin, T. Shi, and Z. Yang.  Ergodic theory over $\mathbb{F}_2[[X]]$. {\it Finite Fields
Appl.}, {\bf 18} (2012), 473--491.

\bibitem{M1} F.M. Mukhamedov, U.A. Rozikov,  On rational $p$-adic dynamical systems.
{\it Methods of Func. Anal. and Topology.} {\bf 10}(2), 21-31 (2004).

\bibitem{Pin09} J.-E. Pin. {\it Profinite methods in automata theory}. In Symposium on Theoretical Aspects
of Computer Science — STACS 2009, pages 31–50, Freiburg, 2009.

\bibitem{R} A.M. Robert, {\it A course of $p$-adic analysis}, Springer, New York, 2000.

\bibitem{S} W. Schikhof, Ultrametric calculas, Cambridge Univ., Cambridge (1984).

\bibitem{SAL12} T. Shi, V. Anashin and D. Lin. {\it Linear weaknesses in $T$-functions}. In
T. Helleseth and J. Jedwab, editors, SETA 2012, volume 7280 of Lecture Notes Comp.
Sci., pages 279--290, Berlin–Heidelberg, 2012. Springer-Verlag.

\bibitem{Sil07} J. Silverman. {\it The arithmetic of dynamical systems}. Number 241 in Graduate Texts in
Mathematics. Springer-Verlag, New York, 2007.

\bibitem{VV} V.S. Vladimirov, I.V. Volovich, and E.I. Zelenov, The spectral
theory in the p-adic quantum mechanics. {\it Izvestia Akad. Nauk SSSR, Ser.
Mat.}, {\bf 54}(2), 275--302 (1990).


\end{thebibliography}
\end{document}